\newtheorem{theorem}{Theorem}
\newtheorem{lemma}[theorem]{Lemma}
\newtheorem{proposition}[theorem]{Proposition}
\newtheorem{corollary}[theorem]{Corollary}
\theoremstyle{definition}
\newtheorem{definition}[theorem]{Definition}
\newtheorem{example}[theorem]{Example}
\theoremstyle{remark}
\title{Approximate Pythagoras Numbers on $*$-algebras over $\mathbb{C}$}
\author{Paria Abbasi}
\address{Department of Mathematics, University of Innsbruck, Austria}
\author{Sander Gribling}
\address{IRIF, Universit\'e de Paris, France}
\author{Andreas Klingler}
\address{Institute for Theoretical Physics, University of Innsbruck, Austria}
\author{Tim Netzer}
\address{Department of Mathematics, University of Innsbruck, Austria}
\date{\today}                                           
\begin{document}

%

\begin{abstract}
The Pythagoras number of a sum of squares is the shortest length among its sums of squares representations. In many algebras, for example real polynomial algebras in two or more variables, there exists no upper bound on the Pythagoras number for all sums of squares.
In this paper, we study how Pythagoras numbers in $*$-algebras over $\mathbb{C}$ behave with respect to small perturbations of elements. More precisely, the approximate Pythagoras number of an element is the smallest Pythagoras number among all elements in its $\varepsilon$-ball. We show that these approximate Pythagoras numbers are often significantly smaller than their exact versions, and allow for (almost) dimension-independent upper bounds. Our results use low-rank approximations for Gram matrices of sums of squares and estimates for the operator norm of the Gram map.
\end{abstract}

\maketitle

\section{Introduction and Preliminaries}


Artin's solution to Hilbert's 17th Problem states that every nonnegative rational function $f$ in $\mathbb{R}(x_1, \ldots, x_n)$ admits a decomposition into a sum of squares of rational functions. But how many squares are necessary to decompose an element? This is captured by its Pythagoras number, i.e. the minimum integer $r$ such that there exists a decomposition
$$ f = \sum_{k=1}^r p_k^2.$$
For real rational functions, Pfister \cite{pf67} showed that the Pythagoras number of all sums of squares is upper bounded by $2^n$, but it is still unknown how sharp this bound is in general. 

The situation is very different for polynomial rings. Although for univariate polynomials every sum of squares is still a sum of two squares, for two or more variables there exists no upper bound for the Pythagoras numbers of all elements \cite{ch82}.

Sums of squares of real polynomials have recently attained great interest from the point of view of optimization  (see \cite{MR4249542} for an overview). Polynomial optimization problems can be approached via semidefinite programming by relating sums-of-squares decompositions with positive semidefinite Gram matrices.  The Pythagoras number here corresponds to the smallest rank of a positive semidefinite Gram matrix. Since almost all present algorithms for semidefinite optimization are numerical, a certain error in the results will usually occur and be accepted. This led us to the question of how Pythagoras numbers behave with regard to small errors/perturbations of the polynomials. This is closely related to low-rank approximations of Gram matrices (and the operator norm of the Gram map).

The concept of sum-of-squares decompositions and their corresponding Pythagoras numbers is not exclusive to polynomials. Given any complex $*$-algebra, we call an element $a$ a sum of (Hermitian) squares if it admits a decomposition
$$a = a_1^{*} a_1 + \cdots +a_m^* a_m,$$ and the smallest such $m$ is its Pythagoras number.

In this paper, we introduce and study  approximate Pythagoras numbers in this broader context.\footnote{The choice of the ground field $\mathbb{C}$ excludes the classic case of $\mathbb{Z}$ as in the famous Four-Square Theorem of Lagrange. But it is essential to talk about small perturbations.} Our main findings can be summarized by saying that approximate Pythagoras numbers are often significantly smaller than the known upper bounds for the exact case. 

Let us emphasize here that our framework requires to \emph{fix} some small approximation error. When letting this error go to zero, for example, when computing Border ranks of matrices or tensors, the results will not be meaningful anymore. However, numerical computations often work with a fixed precision, which is then covered by our approach.

Our approach further allows computing the approximate numbers efficiently (involving semidefinite programming) in cases where the exact Pytha\-goras numbers are very hard to compute, or even unknown.
Moreover, we will apply these results to concrete algebras of commutative and non-commutative complex polynomials, and compute explicit upper bounds of the approximate Pythagoras numbers for sums of Hermitian squares. This also includes real polynomial rings, since any Hermitian square is just the same as a sum of two classical squares here.


This paper is structured as follows.  In \Cref{ss:ph} we introduce Pythagoras numbers and their approximate versions. In \Cref{ss:gram} we explain the Gram map and some of its consequences. \Cref{s:main} contains our main result including an upper bound for the approximate Pythagoras number in general $*$-algebras over $\mathbb{C}$. In particular, we prove in \Cref{ss:sos} an approximation result for positive semidefinite matrices and deduce the main result for approximate Pythagoras numbers (\Cref{mainthm}). In \Cref{ss:free} we apply the results to non-commutative polynomials, and in \Cref{ss:poly} to commutative polynomials. The final \Cref{ss:norm} includes some considerations on the so-called sos-norm that appears as a relevant measure throughout our paper. \Cref{app} contains a table summarizing the notations and definitions of all norms we use throughout the paper.

\subsection{Pythagoras Numbers}\label{ss:ph}
\begin{definition}
Let $\mathcal A$ be a  $*$-algebra over $\mathbb C$. For any sum of Hermitian squares $a\in\sum\mathcal A^2$  the  {\it Pythagoras number} is defined  as $$\mathfrak p(a)\coloneqq \min\left\{m\in\mathbb N\mid \exists a_1,\ldots, a_m\in\mathcal A\colon a=a_1^*a_1+\cdots +a_m^*a_m \right\}$$ i.e.\ as the shortest length of a sum of squares representation of $a$ in $\mathcal A$. For any subset $\mathcal S\subseteq \mathcal A$ we define its  Phythagoras number as $$\mathfrak p(\mathcal S)\coloneqq {\rm sup}\left\{ \mathfrak p(a)\mid a\in \mathcal S\cap\sum\mathcal A^2\right\}.$$ 
\end{definition}

Note that the Pythagoras number of a single element is also called \emph{sum of squares length} in the literature (see for example \cite{sch16}). To use only one name for similar concepts, we will call both $\mathfrak{p}(a)$ and $\mathfrak{p}(\mathcal{S})$ a Pythagoras number.

\begin{example} ($i$) In \cite{pf67} it was shown that every sum of squares in the field $\mathbb R(x_1,\ldots, x_n)$ is a sum of  at most $2^n$ squares. This bound is tight for $n \leqslant 2$ but it is still unclear whether it can be improved for $n \geqslant 3$. The only known lower bound on this number is $n+2$ (see for example \cite{sch16} for an overview). 

 In our setting we consider the algebra $\mathcal A=\mathbb C(x_1,\ldots, x_n)$ with involution acting as complex conjugation on the coefficients. Then a Hermitian square in $\mathcal A$ is the same as a sum of two classical squares in $\mathbb R(x_1,\ldots, x_n)$, so  in our notation we have $$\frac{n}{2}+1 \leqslant \mathfrak p(\mathbb C(x_1,\ldots, x_n))\leqslant 2^{n-1}.$$

($ii$) In the commutative polynomial algebra $\mathcal A=\mathbb C[x_1,\ldots, x_n],$ again with involution acting as conjugation on coefficients, we have $\mathfrak p(\mathbb C[x_1])=1$  and $$\mathfrak p(\mathbb C[x_1,\ldots, x_n])=\infty$$ for $n\geqslant 2$. For $n=1$ this is easy to see from the decomposition of a nonnegative polynomial into irreducible factors (we again get the factor of $1/2$ when compared to the usual way of stating the result for real polynomials, since we use the complex Hermitian setup), the result for  $n\geqslant 2$ was proven in \cite{ch82}.

($iii$) The situation becomes  more involved  when considering polynomials of fixed degree, for example when computing $$\mathfrak p\left(\mathbb C[x_1,\ldots, x_n]_{2d}\right)$$ where $\mathbb C[x_1,\ldots, x_n]_{2d}$ denotes the  subspace   of homogeneous polynomials (a.k.a.\ forms) of degree $2d$. We do not state the known results in detail, but refer to \cite{sch16} instead.  For a few small cases of $n$ and $d$ the value  is known exactly, and in the general case an upper bound is known. The growth of this upper  bound is  $$\mathcal O\left(d^{\frac{n-1}{2}}\right)$$ and  in \cite{sch16} it is proven (up to a conjecture of Iarrobino-Kanev from algebraic geometry)  that this is  asymptotically tight as $d\to\infty.$ 

($iv$) For  the free polynomial algebra $\mathcal A=\mathbb C\langle z_1,\ldots, z_n\rangle$ with involution defined by $z_i^*=z_i,$ we also get $\mathfrak p(\mathcal A)=\infty$ for $n\geqslant 2$. This can either be shown directly (we will see this below), but it also follows immediately from the commutative result. Indeed every sum of squares in $\mathbb C[x_1,\ldots, x_n]$ admits a sum of squares preimage  in $\mathbb C\langle z_1,\ldots, z_n\rangle$   (w.r.t\ the commutative collapse map $\mathfrak c\colon \mathbb C \langle z_1,\ldots, z_n\rangle\to \mathbb C[x_1,\ldots, x_n]$), whose Pythagoras number is as large as the inital one.
\end{example}

We now turn to {\it approximate versions} of Pythagoras numbers. We equip the algebra $\mathcal A$ (or at least some subspace) with a norm $\Vert\cdot\Vert_{\mathcal{A}}$. Given $a\in\sum\mathcal A^2$, we study the smallest Pythagoras number realized by elements in the $\varepsilon$-ball around $a$.  

Some words of warning are required here.  This  approximate Pythagoras number clearly depends on $\varepsilon$, but might also  depend significantly on the subspace in which the approximating sum of squares is contained, or even on the subspace from which the elements to be squared are chosen. While the classical Pythagoras number does not change under positive scaling, the approximate version clearly does, and thus needs to involve some measure of the size of $a$. It thus does not make  sense to define it for a full subspace.

Not to overload notation, we thus refrain from introducing some symbol to denote the $\varepsilon$-Pythagoras number of an element or even a set of elements, but state all results about approximations as explicitly as possible.
 However, the main principle we will see in the following is the following: {\it Approximate Pythagoras numbers are often significantly smaller than  the exact  Pythagoras numbers or the previously known upper bounds.}

\subsection{The Gram Map}\label{ss:gram} An essential ingredient for the below results is the Gram map, which relates sums of squares to positive semidefinite matrices. It was introduced in \cite{ChLR95} and has been used widely since. 

For the rest of the paper, let ${\rm Mat}_d(\mathbb C)$ be the set of complex $d \times d$ matrices and ${\rm Psd}_d(\mathbb C) \subseteq {\rm Mat}_d(\mathbb C)$ the convex cone of positive semidefinite matrices. Further, let $\mathcal A$ be a unital $*$-algebra over $\mathbb C$. We fix a linear subspace $\mathcal V\subseteq \mathcal A$, together with a basis $\underline v=(v_1,\ldots, v_d)$. Then the associated {\it Gram map} is 
\begin{align*}
G_{\underline v} \colon {\rm Mat}_d(\mathbb C) & \to  \mathcal A \\ M=(m_{ij})_{i,j} &\mapsto (v_1^*,\ldots, v_d^*) M (v_1,\ldots, v_d)^t=\sum_{i,j=1}^d m_{ij}v_i^*v_j.
\end{align*}
It is easy to see that $G_{\underline v}$ is a $*$-linear map, whose image is $$\mathcal V^*\mathcal V\coloneqq {\rm span}\left\{ v_i^*v_j\mid i,j=1,\ldots, d\right\}.$$
Moreover, the cone $$\sum \mathcal V^2\coloneqq \left\{ \sum_{k=1}^m w_k^*w_k\mid m\in\mathbb N, w_k\in \mathcal V\right\}$$ of sums of Hermitian squares  of elements from $\mathcal V$ coincides with the image $$G_{\underline v}({\rm Psd}_d(\mathbb C)).$$ This follows directly from the following observation: Given $w_1,\ldots, w_m\in\mathcal V$, express each  $w_k$ in the basis $\underline{v}$,
$$ w_k = \sum_{i=1}^d c_{ki} v_i,$$
and consider the matrix $M = (m_{ij})_{i,j} \in {\rm Psd}_d(\mathbb C)$ defined by  
$$m_{ij} \coloneqq \sum_{k = 1}^{m} \overline c_{ki} \cdot c^{\phantom{*}}_{kj}.$$
We immediately obtain $G_{\underline{v}}(M) = \sum_{k=1}^m w_k^*w_k$.
Conversely, each  positive semidefinite matrix $M$ is a sum of ${\rm rank}(M)$ many Hermitian squares of rank one in ${\rm Mat}_d(\mathbb C)$. Then $G_{\underline v}(M)$  is a sum of at most ${\rm rank}(M)$ many Hermitian squares from $\mathcal V$.

This observation also implies that the Pythagoras number of an element equals the minimal rank among all of its positive semidefinite Gram matrices, which further implies $$\mathfrak p\left(\sum\mathcal 
V^2\right)\leqslant {\rm dim}_{\mathbb C}\left(\mathcal V\right).$$
With the following lemma (see \cite{Bar02} \Romannum{2}.\ 14, Problem 4), one can lower this general upper bound in certain cases. 
\begin{lemma}\label{lmm:1}
Let $A_1,\ldots, A_k \in {\rm Mat}_d(\mathbb{C})$ be Hermitian and $\alpha_1,\ldots,\alpha_k\in \mathbb{R}$.

If the system of equations ${\rm tr}( A_i X)=\alpha_i$ for $i=1,\ldots, k$ has a positive semidefinite solution $X \in {\rm Psd}_d(\mathbb C)$, then there is also a positive semidefinite solution $X_0\in {\rm Psd}_d(\mathbb C)$ with ${\rm rank}(X_0)\leqslant r$,
whenever $r$ satisfies $k\leqslant r^2+2r$. 
\end{lemma}

\begin{corollary}\label{cor:impcar} For every finite-dimensional subspace $\mathcal V\subseteq\mathcal A$ we have 
$$\mathfrak p\left(\sum\mathcal 
V^2\right)\leqslant \left\lceil \sqrt{\dim(\mathcal V^*\mathcal V)}\right\rceil.$$ 
\end{corollary}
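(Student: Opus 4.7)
The plan is to apply Lemma \ref{lmm:1} to the system of linear equations that encodes ``$X$ is a Gram matrix of $a$''. The Pythagoras number of $a \in \sum\mathcal V^2$ equals the minimal rank among all positive semidefinite $X \in {\rm Mat}_d(\mathbb C)$ with $G_{\underline v}(X) = a$, as already noted in the excerpt. So it suffices to show that whenever such an $X$ exists, one of rank at most $\lceil \sqrt{\dim(\mathcal V^*\mathcal V)}\rceil$ exists.

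First I would verify that the constraint $G_{\underline v}(X) = a$ is a system of linear equations of the form required by the lemma, and count them. Since $(v_i^*v_j)^* = v_j^*v_i$, the subspace $\mathcal V^*\mathcal V$ is stable under the involution, so it decomposes as $(\mathcal V^*\mathcal V)_h \oplus i(\mathcal V^*\mathcal V)_h$ where the Hermitian part has real dimension $D := \dim_{\mathbb C}(\mathcal V^*\mathcal V)$. Since $a$ is a sum of Hermitian squares it is Hermitian, and we may restrict $G_{\underline v}$ to the real vector space ${\rm Her}_d(\mathbb C)$, mapping it into $(\mathcal V^*\mathcal V)_h$. Fixing a real basis $b_1,\ldots,b_D$ of $(\mathcal V^*\mathcal V)_h$, the equation $G_{\underline v}(X) = a$ becomes $D$ real-linear equations in the real coordinates of the Hermitian matrix $X$. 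Since every real linear functional on ${\rm Her}_d(\mathbb C)$ has the form $X \mapsto {\rm tr}(A_i X)$ for a unique Hermitian $A_i$, we obtain exactly $k = D$ equations ${\rm tr}(A_i X) = \alpha_i$ of the type required.

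Next I would check the arithmetic needed to invoke Lemma \ref{lmm:1}. Setting $r := \lceil \sqrt{D}\rceil$ we have $r^2 \geq D$, so $r^2 + 2r \geq D = k$. The lemma then supplies a positive semidefinite $X_0$ of rank at most $r$ satisfying the same equations, i.e.\ $G_{\underline v}(X_0) = a$. Decomposing $X_0$ as a sum of ${\rm rank}(X_0) \leq r$ rank-one positive semidefinite matrices and applying $G_{\underline v}$ yields a representation of $a$ as a sum of at most $\lceil\sqrt{D}\rceil$ Hermitian squares, as desired.

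The main conceptual point (and the only mild obstacle) is the book-keeping in step one: one must notice that although $\mathcal V^*\mathcal V$ is defined as a complex subspace of dimension $D$, it is a $*$-subspace whose Hermitian part also has \emph{real} dimension $D$, so the matching of equations to the $k \leq r^2 + 2r$ budget uses $k = D$ rather than $2D$. Everything else is a direct consequence of the stated Gram-map framework and Lemma \ref{lmm:1}.
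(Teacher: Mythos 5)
Your proposal is correct and follows essentially the same route as the paper: encode $G_{\underline v}(X)=a$ as $k=\dim(\mathcal V^*\mathcal V)$ Hermitian trace constraints (the paper does this by choosing a basis of $\mathcal V^*\mathcal V$ consisting of Hermitian elements, which is the same bookkeeping as your passage to the Hermitian part), then apply Lemma \ref{lmm:1} with $r=\lceil\sqrt{k}\rceil$ since $k\leqslant r^2\leqslant r^2+2r$. No gaps.
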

\begin{proof}Let $\underline v=(v_1,\ldots, v_d)$ be a basis of $\mathcal V$, set $k\coloneqq \dim\left(\mathcal V^*\mathcal V\right),$ and equip the vector space $\mathcal V^*\mathcal V$ with a basis $\underline w=(\omega_1, \ldots, \omega_k)$ such that $\omega_{i}=\omega_{i}^*$ for all $i=1,\ldots, k$. For $a \in \sum\mathcal 
V^2$ there exists a positive semidefinite matrix $M \in {\rm Psd}_d(\mathbb C)$ such that 
$$a=G_{\underline v}(M)=\sum_{i,j=1}^{d}m_{ij}v_i^*v_j.$$ 
Write $v_i^*v_j=\sum_{l=1}^{k}\lambda_{l}^{i,j}\omega_l$ and $a=\sum_{l=1}^{k}\lambda_{l}\omega_{l}$ with all $\lambda_{l}^{i,j},\lambda_l\in \mathbb{C}$. We  then have
$$\sum_{i,j=1}^{d}m_{ij}v_i^*v_j=\sum_{l=1}^{k}\left(\sum_{i,j=1}^{d}m_{ij}\lambda_{l}^{i,j}\right)\omega_l=\sum_{l=1}^{k}\lambda_{l}\omega_{l},$$
which implies 
$$\sum_{i,j=1}^{d}m_{ij}\lambda_{l}^{i,j}=\lambda_{l} \quad \mbox{ for } l=1,\ldots, k.$$
Consider the Hermitian matrices $$A_{l}\coloneqq \left(\overline\lambda_{l}^{i,j}\right)_{i,j}\in{\rm Mat}_d(\mathbb C)$$ for $l=1,\ldots, k$. The  positive semidefinite matrix $M$ is then clearly a solution of the following equations:
$$\label{semidef}{\rm tr}(A_lM)=\lambda_{l} \quad \mbox{ for } l=1,\ldots, k.$$  Now $r\coloneqq  \lceil \sqrt{k}\rceil$ satisfies the requirements of Lemma \ref{lmm:1}, and thus there also exists a positive semidefinite solution $M_0$ with ${\rm rank}(M_0)\leqslant \lceil \sqrt{k}\rceil.$ Applying the Gram map to $M_0$ shows that $a$ is a sum of at most $r$ many squares.
\end{proof}

Note that $\sqrt{\dim\left(\mathcal V^*\mathcal V\right)}\leqslant \dim(\mathcal V)$, but the inequality can be strict, in which case Corollary \ref{cor:impcar} might give a better upper bound for $\mathfrak p\left(\sum\mathcal 
V^2\right)$ than $\dim(\mathcal V).$

\begin{example}\label{ex:spacedim}
For the  space $\mathcal{V}=\mathbb C[x_1,\ldots, x_n]_{d}$ of homogeneous polynomials of degree  $d$ we have 
$\mathcal V^*\mathcal V=\mathbb C[x_1,\ldots, x_n]_{2d}$ with 
$${\rm dim}_{\mathbb C}\left(\mathcal V^*\mathcal{V}\right)={{2d+n-1}\choose{n-1}}.$$ This  gives  an upper bound for the Pythagoras number $\mathfrak p\left(\sum \mathcal V^2\right)$ that grows like  $\mathcal O\left(d^{\frac{n-1}{2}}\right)$ for $d\to\infty$.\end{example}


\section{Main Results}\label{s:main}
\subsection{Approximating Sums of Squares}\label{ss:sos}
For $1\leqslant p<\infty$ we equip the space ${\rm Mat}_d(\mathbb C)$ with the  the Schatten $p$-norm,
$$\Vert M\Vert_p \coloneqq \left({{\rm tr}(\sqrt{M^*M}^p)}\right)^{1/p},$$
and also use the Schatten $\infty$-norm $\Vert M \Vert_{\infty} \coloneqq  \sigma_{\max}(M)$, which is the largest singular value of $M$.
The following proposition is the main technical ingredient for all of our later results. It uses a well-known technique in low rank approximation theory, we include the proof for completeness.

\begin{proposition}\label{prop:sanderandinfinity}
Let $M\in{\rm Psd}_d(\mathbb C)$ and $\varepsilon >0$ be fixed. 
\begin{enumerate}[label=(\roman*)]
\item For $1 < p < \infty$, there exists a matrix $M' \in {\rm Psd}_d(\mathbb C)$ such that $$\Vert M-M'\Vert_p\leqslant \varepsilon$$ and $${\rm rank}(M')<  \left(\frac{{\rm tr}(M)}{\varepsilon}\right)^{\frac{p}{p-1}}$$

\item There exists a matrix $M' \in {\rm Psd}_d(\mathbb C)$ such that $\Vert M-M'\Vert_\infty\leqslant \varepsilon$ and $${\rm rank}(M')< \frac{{\rm tr}(M)}{\varepsilon}.$$
\end{enumerate}
\end{proposition}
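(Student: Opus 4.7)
The plan is to prove both parts by the same classical device, namely spectral truncation of $M$. Since $M$ is positive semidefinite, diagonalize it as $M=\sum_{i=1}^d \lambda_i u_i u_i^*$ with $\lambda_1\geqslant \lambda_2 \geqslant \cdots \geqslant \lambda_d\geqslant 0$ and an orthonormal basis $u_1,\ldots,u_d$. For a chosen threshold $t>0$, I let $r$ be the number of indices $i$ with $\lambda_i>t$ and define $M'\coloneqq \sum_{i=1}^r \lambda_i u_i u_i^*$. Then $M'$ is positive semidefinite with $\mathrm{rank}(M')=r$, and $M-M'=\sum_{i>r}\lambda_i u_i u_i^*$ is also positive semidefinite with all eigenvalues $\leqslant t$.

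For part ($ii$), I would take $t\coloneqq \varepsilon$. Then $\|M-M'\|_\infty=\lambda_{r+1}\leqslant \varepsilon$ (or $0$ if $r=d$). Moreover, each of the $r$ eigenvalues $\lambda_1,\ldots,\lambda_r$ exceeds $\varepsilon$ strictly, so
\[
r\,\varepsilon < \lambda_1+\cdots+\lambda_r \leqslant \mathrm{tr}(M),
\]
and the rank bound follows. (If $M=0$ the statement is vacuous; otherwise $\mathrm{tr}(M)>0$, and either $r=0$ works trivially or the strict inequality just displayed applies.)

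For part ($i$), I would set the threshold $t$ to a power of $\varepsilon/\mathrm{tr}(M)$ so that the $p$-norm comes out right. The key computation is to estimate the tail Schatten norm by a factor of $t^{p-1}$ times a trace:
\[
\|M-M'\|_p^p = \sum_{i>r}\lambda_i^p \leqslant t^{p-1}\sum_{i>r}\lambda_i \leqslant t^{p-1}\,\mathrm{tr}(M),
\]
where I used $\lambda_i\leqslant t$ for $i>r$. Thus $\|M-M'\|_p\leqslant t^{(p-1)/p}\,\mathrm{tr}(M)^{1/p}$, and choosing $t\coloneqq \varepsilon^{p/(p-1)}\,\mathrm{tr}(M)^{-1/(p-1)}$ makes this bound equal to $\varepsilon$. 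On the rank side, the same inequality $r\,t<\mathrm{tr}(M)$ gives
\[
r < \frac{\mathrm{tr}(M)}{t}=\left(\frac{\mathrm{tr}(M)}{\varepsilon}\right)^{p/(p-1)},
\]
as claimed.

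There is no real obstacle; the only care needed is the strict-versus-non-strict inequality bookkeeping (which is why I define $r$ by the strict condition $\lambda_i>t$, ensuring $r\,t$ is strictly less than the sum of the surviving eigenvalues) and the degenerate case $M=0$, where the statement holds trivially with $M'=0$. The technique is the standard truncated SVD/eigendecomposition argument and requires nothing beyond Hölder-type estimates on diagonal matrices, so I would keep the proof short and direct.
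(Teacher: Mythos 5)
Your proof is correct and is essentially the paper's own argument: truncate the spectral decomposition and bound the tail via $\sum_{i>r}\lambda_i^p\leqslant \lambda_{r+1}^{p-1}\,\mathrm{tr}(M)$ together with the Markov-type count relating the number of large eigenvalues to the trace. The only cosmetic difference is that in part ($i$) you parametrize the truncation by an eigenvalue threshold $t$ while the paper picks the cutoff index $k$ directly; these are equivalent.
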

\begin{proof}
Let $$ M = \sum_{i=1}^{d} \lambda_i v_i v_i^{*}$$
be a spectral decomposition of $M$, where we assume $\lambda_1\geqslant \lambda_2\geqslant \cdots \geqslant \lambda_d\geqslant 0$. 

($i$) For each $k=1,\ldots, d$ we have $${\rm tr}(M)=\sum_{i=1}^d\lambda_i \geqslant \sum_{i=1}^k\lambda_i\geqslant k\lambda_k,$$ which implies $\lambda_k\leqslant {\rm tr}(M)/k.$ Choose $k\in\mathbb N$ with $k<\left(\frac{{\rm tr}(M)}{\varepsilon}\right)^{\frac{p}{p-1}}\leqslant k+1,$ and define $$M':=\sum_{i=1}^k\lambda_iv_iv_i^*\in{\rm Psd}_d(\mathbb C)$$ which fulfills  ${\rm rank}(M')\leqslant k< \left(\frac{{\rm tr}(M)}{\varepsilon}\right)^{\frac{p}{p-1}}.$ The following computation thus completes the proof of ($i$):  $$\Vert M-M'\Vert_p^p=\sum_{i=k+1}^d\lambda_i^p\leqslant \lambda_{k+1}^{p-1}\cdot \sum_{i=k+1}^d\lambda_i\leqslant \lambda_{k+1}^{p-1}\cdot {\rm tr}(M)\leqslant \frac{{\rm tr}(M)^p}{(k+1)^{p-1}}\leqslant \varepsilon^p.$$

($ii$)
Choose $k$ maximal with $\lambda_k>\varepsilon$ and define 
$$ M' = \sum_{i=1}^k \lambda_i v_i v_i^{*} \in \textrm{Psd}_d(\mathbb{C}).$$
Then $\Vert M - M' \Vert_{\infty} \leqslant \varepsilon$ and from ${\rm tr}(M)>k\varepsilon$ we obtain 
\begin{equation}\textrm{rank}(M') \leqslant k <  \frac{{\rm tr}(M)}{\varepsilon}.\tag*{\qedhere}\end{equation} \end{proof}

Now we also fix   a vector space norm $\Vert\cdot\Vert_{\mathcal{A}}$ on the algebra $\mathcal A$.
Since the Gram map $$G_{\underline v}\colon {\rm Mat}_d(\mathbb C) \to \mathcal A$$  is linear and defined on a finite-dimensional space, it is bounded, i.e.\ for any constant $1\leqslant p \leqslant  \infty$ we can find some positive constant $C$ such that $$\Vert G_{\underline v}(M)\Vert_{\mathcal A} \leqslant C\cdot \Vert M\Vert_p$$ for all $M\in {\rm Mat}_d(\mathbb C)$. For a fixed choice of $p$, the smallest such $C$ is called the {\it operator norm} of $G_{\underline v}$ and is denoted by $\Vert G_{\underline v}\Vert_{p \to \mathcal{A}}$.

Recall that a sum of squares $a\in\sum\mathcal V^2$ might have several positive semidefinite Gram matrices. Since the trace of a Gram matrix plays a crucial role in the following, this motivates the subsequent definition for elements $a\in\sum\mathcal V^2$: $$\Vert a\Vert_{\underline v,{\rm sos}}\coloneqq {\rm min}\left\{ {\rm tr}(M)\mid M\in {\rm Psd}_d(\mathbb C), G_{\underline v}(M)=a\right\}.$$ Note that the minimum is attained since the set of positive semidefinite Gram matrices of $a$ is closed.  Moreover, for elements in $\sum\mathcal V^2,$  $\Vert \cdot\Vert_{\underline v,\rm sos}$ indeed behaves like a norm, i.e.\ it is positive definite, homogeneous, and satisfies the triangle inequality. 

For example, using trace-minimization to approximate rank minimization is a common technique in systems and control theory. Note that computation of $\Vert a\Vert_{\underline v,\rm sos}$ is a semidefinite program, since it is a minimization of the linear function ${\rm tr}$ along the set of positive semidefinite Gram matrices of $a$.
Nevertheless, computing $\Vert a\Vert_{\underline v, \rm sos}$ analytically from $a$  might not be straightforward. Any explicit sum of squares representation of $a$ gives rise to a positive semidefinite Gram matrix of $a$, whose trace then upper bounds $\Vert a\Vert_{\underline v, \rm sos}$. However, note that not all of the diagonal entries of a Gram matrix might be visible directly from $a$.

\begin{theorem}\label{mainthm}
Let  $a\in\sum\mathcal V^2$ and let $\varepsilon >0$ be fixed. Then the  $\varepsilon$-ball around $a$ contains  a sum of at most  $$ \min_{1<p\leqslant\infty}\left(\frac{\Vert G_{\underline v}\Vert_{p \to \mathcal{A}}\cdot\Vert a\Vert_{\underline v,{\rm sos}}}{\varepsilon}\right)^{\frac{p}{p-1}}$$ many Hermitian squares of elements from $\mathcal V$ (where we use the convention $\frac{\infty}{\infty-1}=1$).
\end{theorem}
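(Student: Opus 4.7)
The plan is to combine \Cref{prop:sanderandinfinity} with the boundedness of the Gram map, by approximating an optimal Gram matrix of $a$ by a low-rank one and pushing the approximation through $G_{\underline v}$.

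First I would pick a positive semidefinite Gram matrix $M\in{\rm Psd}_d(\mathbb C)$ with $G_{\underline v}(M)=a$ and ${\rm tr}(M)=\Vert a\Vert_{\underline v,{\rm sos}}$; such a matrix exists because the definition of $\Vert a\Vert_{\underline v,{\rm sos}}$ is attained. Fix a value $1<p\leqslant\infty$ and set $\delta\coloneqq \varepsilon/\Vert G_{\underline v}\Vert_{p\to\mathcal A}$. Applying part ($i$) of \Cref{prop:sanderandinfinity} when $p<\infty$, or part ($ii$) when $p=\infty$, to $M$ with tolerance $\delta$ yields a positive semidefinite matrix $M'$ with $\Vert M-M'\Vert_p\leqslant \delta$ and
\[
{\rm rank}(M')<\left(\frac{{\rm tr}(M)}{\delta}\right)^{\frac{p}{p-1}}=\left(\frac{\Vert G_{\underline v}\Vert_{p\to\mathcal A}\cdot \Vert a\Vert_{\underline v,{\rm sos}}}{\varepsilon}\right)^{\frac{p}{p-1}},
\]
where for $p=\infty$ the exponent equals $1$ by the convention $\frac{\infty}{\infty-1}=1$.

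Next I would set $a'\coloneqq G_{\underline v}(M')$. Since $G_{\underline v}$ is a $*$-linear map and $M'$ is psd of rank $r'={\rm rank}(M')$, writing $M'=\sum_{i=1}^{r'}u_iu_i^*$ and letting $w_i\coloneqq (v_1,\ldots,v_d)\overline{u_i}\in\mathcal V$ shows that $a'=\sum_{i=1}^{r'}w_i^*w_i$ is a sum of $r'$ Hermitian squares of elements from $\mathcal V$, as explained in \Cref{ss:gram}. The approximation is controlled by the operator norm of the Gram map:
\[
\Vert a-a'\Vert=\Vert G_{\underline v}(M-M')\Vert\leqslant \Vert G_{\underline v}\Vert_{p\to\mathcal A}\cdot \Vert M-M'\Vert_p\leqslant \Vert G_{\underline v}\Vert_{p\to\mathcal A}\cdot\delta=\varepsilon.
\]
Finally, I would take the minimum over $1<p\leqslant\infty$ to get the stated bound.

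There is essentially no serious obstacle: the low-rank approximation has already been done in \Cref{prop:sanderandinfinity}, and the bound on $\Vert a-a'\Vert$ is just the definition of the operator norm. The only small bookkeeping points are to use the attained minimum in the definition of $\Vert a\Vert_{\underline v,{\rm sos}}$ rather than a generic Gram matrix (otherwise the bound would only be approached, not met), and to handle the $p=\infty$ case separately via part ($ii$) so that the convention $\frac{\infty}{\infty-1}=1$ matches what actually comes out of the proof.
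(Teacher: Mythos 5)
Your proposal is correct and follows the same route as the paper's own proof: approximate the trace-minimal Gram matrix by a low-rank one via Proposition \ref{prop:sanderandinfinity} with tolerance $\varepsilon/\Vert G_{\underline v}\Vert_{p\to\mathcal A}$, then push through the Gram map. The extra bookkeeping you mention (attainment of the sos-norm minimum, the $p=\infty$ case) is handled the same way in the paper.
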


\begin{proof}Let $M$ be a positive semidefinite Gram matrix for $a$ with $${\rm tr}(M)=\Vert a\Vert_{\underline v, \rm sos}.$$ For any $1<p\leqslant \infty$ we apply Proposition  \ref{prop:sanderandinfinity} to choose some $M'$ with $$\Vert M-M'\Vert_p\leqslant \varepsilon/\Vert G_{\underline v}\Vert_{p \to \mathcal{A}}$$ and of rank smaller than $$r:=\left(\frac{\Vert G_{\underline v}\Vert_{p\to\mathcal A}\cdot{\rm tr}(M)}{\varepsilon}\right)^{\frac{p}{p-1}}.$$ We now simply apply $G_{\underline v}$ and obtain for $a' \coloneqq G_{\underline v}(M'):$ $$\Vert a-a'\Vert_{\mathcal A} =\Vert G_{\underline v}(M)-G_{\underline v}(M')\Vert=\Vert G_{\underline v}(M-M')\Vert\leqslant \Vert G_{\underline v}\Vert_{p \to \mathcal A}\Vert M-M'\Vert_p\leqslant \varepsilon.$$
Since $M'$ is a sum of at most $r$  many  squares in ${\rm Mat}_d(\mathbb C),$ so is $a'$ in $\sum\mathcal V^2.$
\end{proof}

Note that the last result does not involve the dimension of $\mathcal V$ explicitly; however, it is implicitly still contained in $\Vert G_{\underline v}\Vert_{p \to \mathcal A}$ and also in $\Vert a\Vert_{\underline v, \rm sos}$. While $\Vert G_{\underline v}\Vert_{p \to \mathcal A}$ is constant for some instances when choosing a suitable basis, $\Vert a\Vert_{\underline v, \rm sos}$ cannot be bounded independently of the dimension of $\mathcal V$ in general, as we will see.  Nevertheless, in certain cases such a dimension-independent bound is possible, making Theorem \ref{mainthm} a dimension-independent result on approximate Pythagoras numbers.

\subsection{Non-Commutative Polynomials}\label{ss:free}

 Let $\mathbb C\langle \underline z\rangle$ be the $*$-algebra of  non-commutative polynomials in the Hermitian variables $z_1,\ldots, z_n$.  By $\mathbb C\langle \underline z\rangle_{d}$ we denote the subspace of homogeneous polynomials of degree $d$. We choose the basis $\underline v$ of $\mathbb C\langle \underline z\rangle_{d}$ consisting of all words in the variables $z_1,\ldots, z_n$ of length $d.$ Now note that the corresponding Gram map $$G_{\underline v}\colon {\rm Mat}_{n^d}(\mathbb C)\to \mathbb C\langle \underline z\rangle_{2d}$$ is an isomorphism.  Every word of length $2d$ is a unique product of two words of length $d$ by splitting the word in the middle.    It also follows that $$\Vert p\Vert_{\underline v,\rm sos}=\sum_{\vert\nu\vert=d} p_{\nu^*\nu}$$ holds for every $p\in \sum  \mathbb C\langle \underline z\rangle_{d}^2,$ so this norm is directly computable from the coefficients of $p$. 
Furthermore,  $\Vert G\Vert_{p\to\mathbb C\langle\underline z\rangle}=1,$ whenever we equip $\mathbb C\langle\underline z\rangle_{2d}$ with the norm inherited from the Schatten $p$-norm  on matrices. Consequently, Theorem \ref{mainthm} provides a dimension-independent approximation result.

 However, the only Schatten $p$-norm that gives rise to a natural norm on polynomials is the 
 Schatten $2$-norm, which in fact induces the $2$-norm of coefficients on $\mathbb C\langle \underline z\rangle$, i.e., if for $p=\sum_\omega p_\omega \omega$ we set $$\Vert p\Vert_2\coloneqq \left(\sum_{\omega} \vert p_\omega\vert^2\right)^{1/2}.$$

\begin{theorem}\label{thm:free}
Let $p\in \sum  \mathbb C\langle \underline z\rangle_{d}^2$ and let $\varepsilon >0$ be fixed. Then the $\varepsilon$-ball around $p$ with respect to $\Vert \cdot \Vert_2$ contains a sum of at most  $$\frac{\left(\sum_{\vert\nu\vert=d} p_{\nu^*\nu}\right)^2}{\varepsilon^2}$$ many Hermitian squares of elements from $\mathbb C\langle \underline z\rangle_{d}$.\end{theorem}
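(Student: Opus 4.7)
The plan is to deduce this as a direct specialization of Theorem~\ref{mainthm} to the Schatten $2$-norm (the choice $p=2$ in that theorem), with $\underline v$ taken to be the basis of all words of length $d$ in $z_1,\ldots,z_n$. For $p=2$ the exponent $p/(p-1)$ equals $2$, so Theorem~\ref{mainthm} yields a bound of $\bigl(\Vert G\Vert_{2\to\mathbb C\langle\underline z\rangle}\cdot\Vert p\Vert_{{\rm sos}}/\varepsilon\bigr)^{2}$. To match the statement, I must establish the two facts already asserted in the paragraph preceding the theorem: (a) $\Vert G\Vert_{2\to\mathbb C\langle\underline z\rangle}=1$ when $\mathbb C\langle\underline z\rangle_{2d}$ is equipped with the coefficient $2$-norm, and (b) $\Vert p\Vert_{{\rm sos}}=\sum_{|\nu|=d}p_{\nu^*\nu}$.

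Both assertions rest on the key structural observation that the Gram map $G\colon{\rm Mat}_{n^d}(\mathbb C)\to\mathbb C\langle\underline z\rangle_{2d}$ is an \emph{isomorphism}. To verify this, I would check that each word $\omega$ of length $2d$ in the Hermitian generators decomposes uniquely as $\omega=\mu^*\nu$ with $|\mu|=|\nu|=d$: split $\omega$ at the midpoint; because the $z_i$ are Hermitian, the first half reversed is $\mu$ and the second half is $\nu$. Consequently $G$ sends the matrix entry $m_{\mu,\nu}$ to the coefficient of $\mu^*\nu$ in the output polynomial, and this is a coefficient-preserving bijection from $\mathrm{Mat}_{n^d}(\mathbb C)$ onto $\mathbb C\langle\underline z\rangle_{2d}$.

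From this, (a) is immediate, since the Schatten $2$-norm $\bigl(\sum_{\mu,\nu}|m_{\mu,\nu}|^2\bigr)^{1/2}$ is transported by $G$ exactly to the coefficient $2$-norm on $\mathbb C\langle\underline z\rangle_{2d}$. For (b), the bijectivity forces $p$ to have a \emph{unique} Gram matrix $M$, which must therefore coincide with every positive semidefinite Gram matrix produced by an sos-representation of $p\in\sum\mathbb C\langle\underline z\rangle_d^2$; hence $\Vert p\Vert_{{\rm sos}}={\rm tr}(M)=\sum_{|\nu|=d}m_{\nu,\nu}=\sum_{|\nu|=d}p_{\nu^*\nu}$, the last equality again by the matching of coefficients. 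Substituting into Theorem~\ref{mainthm} gives the claimed bound, with the strict inequality inherited from Proposition~\ref{prop:sanderandinfinity}(i), whose rank estimate is strict. There is no substantive obstacle here: the entire proof is bookkeeping once the unique midpoint-factorization of words is noted, and this unique-factorization property is precisely what makes the non-commutative case much cleaner than the commutative one treated in the next subsection.
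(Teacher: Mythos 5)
Your proposal is correct and follows essentially the same route as the paper, which presents Theorem~\ref{thm:free} as an immediate specialization of Theorem~\ref{mainthm} with $p=2$, using exactly the facts you verify in the preceding paragraph: the Gram map on the word basis is an isomorphism by unique midpoint factorization, hence $\Vert G\Vert_{2\to\mathbb C\langle\underline z\rangle}=1$ for the coefficient $2$-norm and $\Vert p\Vert_{\rm sos}=\sum_{\vert\nu\vert=d}p_{\nu^*\nu}$ via the unique (necessarily psd) Gram matrix. Your added details (the reversal of the first half giving $\mu$, and the strictness of the bound coming from Proposition~\ref{prop:sanderandinfinity}) are accurate.
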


Note that given some $p\in \sum  \mathbb C\langle \underline z\rangle_{d}^2$, an approximating sum of squares with small Pythagoras number attains an explicit construction. First, the (only) positive semidefinite Gram matrix $M$ of $p$ can be read off directly from the coefficients of $p$. Next, we apply the construction in the proof of Proposition  \ref{prop:sanderandinfinity} to obtain an approximation $M'$ of $M$, and finally apply the Gram map again.

\subsection{Commutative Polynomials}\label{ss:poly}
Let $S$ be a compact Hausdorff space and $C(S,\mathbb C)$ the commutative $*$-algebra of complex-valued continuous functions on $S$, equipped with the usual sup-norm $\Vert\cdot\Vert_\infty$. The following lemma shows how we can bound $\Vert G_{\underline v}\Vert_{\infty\to  C(S,\mathbb C)},$ which is the largest among all operator norms, with a very natural choice of basis for polynomials.

\begin{lemma}\label{lem:op}
($i$) Let $\mathcal V\subseteq C(S,\mathbb C)$ be a subspace with a basis $\underline v=(v_1,\ldots, v_d)$ such that $$\Vert(v_1(s),\ldots,v_d(s))\Vert \leqslant 1$$ for all $s\in S$ (here we use the standard Euclidean norm on $\mathbb C^d$). Then for the corresponding Gram map $G_{\underline v}\colon{\rm Mat}_d(\mathbb C)\to C(S,\mathbb C)$ we have $$  \Vert G_{\underline v}\Vert_{\infty\to  C(S,\mathbb C)} \leqslant 1.$$
	
\noindent ($ii$) Let $S^{n-1}\subseteq \mathbb R^{n}$ be the real unit sphere and $\mathfrak m_d$ the tuple of all monomials in $x_1,\ldots, x_n$ of degree $d$. Then for every $s\in S^{n-1}$ and all $d\geqslant 1$ we have $\Vert\mathfrak m_d(s)\Vert\leqslant 1.$

\end{lemma}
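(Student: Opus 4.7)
The plan is to dispatch both parts by direct computation: for (i), I would exploit a pointwise formula for the Gram map and then reduce to a standard spectral-norm bound on a quadratic form; for (ii), I would apply the multinomial theorem on the sphere.

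For (i), the key observation is the pointwise identity
\[
G_{\underline v}(M)(s) \;=\; \sum_{i,j=1}^d m_{ij}\,\overline{v_i(s)}\,v_j(s) \;=\; w(s)^{\ast} M\, w(s),
\]
valid for every $s\in S$, where $w(s)\coloneqq (v_1(s),\ldots,v_d(s))^t\in\mathbb C^d$. From here I would apply Cauchy--Schwarz and the defining property of the Schatten $\infty$-norm (which coincides with the spectral/operator norm on ${\rm Mat}_d(\mathbb C)$) to obtain
\[
\bigl|G_{\underline v}(M)(s)\bigr| \;\leqslant\; \Vert w(s)\Vert_2 \cdot \Vert M\,w(s)\Vert_2 \;\leqslant\; \Vert w(s)\Vert_2^{\,2}\,\Vert M\Vert_\infty.
\]
The hypothesis $\Vert w(s)\Vert_2\leqslant 1$ for all $s\in S$ then gives $|G_{\underline v}(M)(s)|\leqslant\Vert M\Vert_\infty$, and taking the supremum over $s\in S$ yields $\Vert G_{\underline v}(M)\Vert_\infty\leqslant\Vert M\Vert_\infty$, i.e.\ the claimed operator-norm bound.

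For (ii), I would use the multinomial expansion restricted to the unit sphere. For any real $x\in S^{n-1}$,
\[
1 \;=\; (x_1^2+\cdots+x_n^2)^d \;=\; \sum_{|\alpha|=d}\binom{d}{\alpha}\,x^{2\alpha},
\]
and since $|x^\alpha|^2 = x^{2\alpha}\geqslant 0$ while each multinomial coefficient satisfies $\binom{d}{\alpha}\geqslant 1$, one immediately reads off
\[
\Vert\mathfrak m_d(x)\Vert^2 \;=\; \sum_{|\alpha|=d} x^{2\alpha} \;\leqslant\; \sum_{|\alpha|=d}\binom{d}{\alpha}\,x^{2\alpha} \;=\; 1.
\]

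Neither part is technically deep. The only thing that requires attention is the matching of norms in (i): one has to remember that the Schatten $\infty$-norm is the spectral norm and that this is exactly what controls the sesquilinear form $w^\ast M w$ by $\Vert w\Vert_2^{\,2}$. In (ii), the slack $\binom{d}{\alpha}\geqslant 1$ is precisely the gap between the plain monomial basis used here and the Fischer/Bombieri orthonormal basis, which is what makes the lemma valid for this very natural (but non-orthonormal) choice of basis.
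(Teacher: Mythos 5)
Your proof is correct and follows essentially the same route as the paper: part (i) is the same pointwise evaluation $G_{\underline v}(M)(s)=\underline v(s)^*M\,\underline v(s)$ bounded by the spectral norm of $M$ on the unit ball (the paper bounds the quadratic form directly, you insert Cauchy--Schwarz, but these are the same estimate), and part (ii) is the same multinomial expansion of $\Vert s\Vert^{2d}=1$ with the observation that every term $s^{2\alpha}$ appears with coefficient at least $1$.
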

\begin{proof}
($i$) For $M\in{\rm Mat}_d(\mathbb C)$ and $\underline v(s)=\left(v_1(s),\ldots, v_d(s)\right)^t$ we have \begin{align*}\Vert G_{\underline v}(M)\Vert_\infty&=\max_{s\in S}\left\vert \underline v(s)^*M \underline v(s)\right\vert \\ &\leqslant \max_{y\in\mathbb C^d, y^*y\leqslant 1} \left\vert y^*My\right\vert \\ &= \max_{y\in\mathbb C^d, y^*y=1} \left\vert y^*My\right\vert\\ &\leqslant \Vert M\Vert_\infty.\end{align*} This proves the claim.

($ii$) For $s\in S^{n-1}$ and $d\geqslant 1$ we have 
$$\Vert \mathfrak m_{d}(s)\Vert^2=\sum_{\vert\alpha\vert=d} s^{2\alpha}\leqslant \left( \sum_{i=1}^n s_i^2\right)^d=\Vert s\Vert^{2d}=1.$$
The inequality is due to the fact that each term from the sum on the left appears at least once in the sum on the right.
\end{proof}

When homogeneous polynomials are considered as functions on $S^{n-1}$, the sup-norm $\Vert\cdot\Vert_\infty$ indeed defines a norm on $\mathbb C[\underline x]_{2d}$.

\begin{theorem}\label{thm:poly}
Let $p\in\sum \mathbb C[\underline x]_d^2$ and let $\varepsilon >0$ be fixed. Then the $\varepsilon$-ball around $p$ with respect to $\Vert \cdot \Vert_\infty$ contains a sum of at most
$$ \frac{\Vert p\Vert_{\mathfrak m_d,\rm sos}}{\varepsilon}$$ many Hermitian squares from   $\mathbb C[\underline x]_d.$
\end{theorem}
\begin{proof}
Clear from Theorem \ref{mainthm} and Lemma \ref{lem:op}.\end{proof}

\begin{example}\label{ex:poly}
Consider the family of polynomials $$p_{n,d}:=\Vert\mathfrak m_d(\underline x)\Vert^2=\sum_{\vert\alpha\vert =d}\underline x^{2\alpha}\in\sum\mathbb C[\underline x]^2_d.$$  
To the best of our knowledge, the exact Pythagoras numbers of the $p_{n,d}$ are unknown. Moreover, no upper bounds that are better than the general ones seem to exist.

The identity matrix  is clearly a psd Gram matrix of $p_{n,d}$, but with full rank. So the best known upper bound to $\mathfrak p(p_{n,d})$ is   
$$\sqrt{{{2d+n-1}\choose{n-1}}}$$ from Example \ref{ex:spacedim}. 

Considering approximations of $p_{n,d}$ with respect to the $\infty$-norm on the sphere, note that $\Vert p_{n,d}\Vert_{\infty}=1$ for all $n,d$ ($\leqslant$ is Lemma \ref{lem:op} ($ii$) and $\geqslant$ is obvious by evaluating at $(1,0,\ldots 0)\in S^{n-1}$). This normalization condition allows for comparing the approximation results among different values of $n$ and $d$.

The trace of the identity matrix is ${{d+n-1}\choose{n-1}}$, but it turns out that the sos-norm of $p_{n,d}$ is actually significantly smaller. We have used a semidefinite programming solver with Python to compute $\Vert p_{n,d}\Vert_{\mathfrak m_d,\rm sos}$ for $n=3,4,5,6$ and several values of $d$. Figure \ref{fig:sosnorm} shows that it grows much slower than the general upper bound. 

Therefore, Theorem \ref{thm:poly} implies  the existence of approximations of $p_{n,d}$ from $\sum \mathbb C[\underline x]_d^2$ that have a significantly smaller Pythagoras number than the general known upper bounds, for fixed approximation error $\varepsilon > 0$.

\begin{figure}[h!]
\includegraphics[scale=0.45]{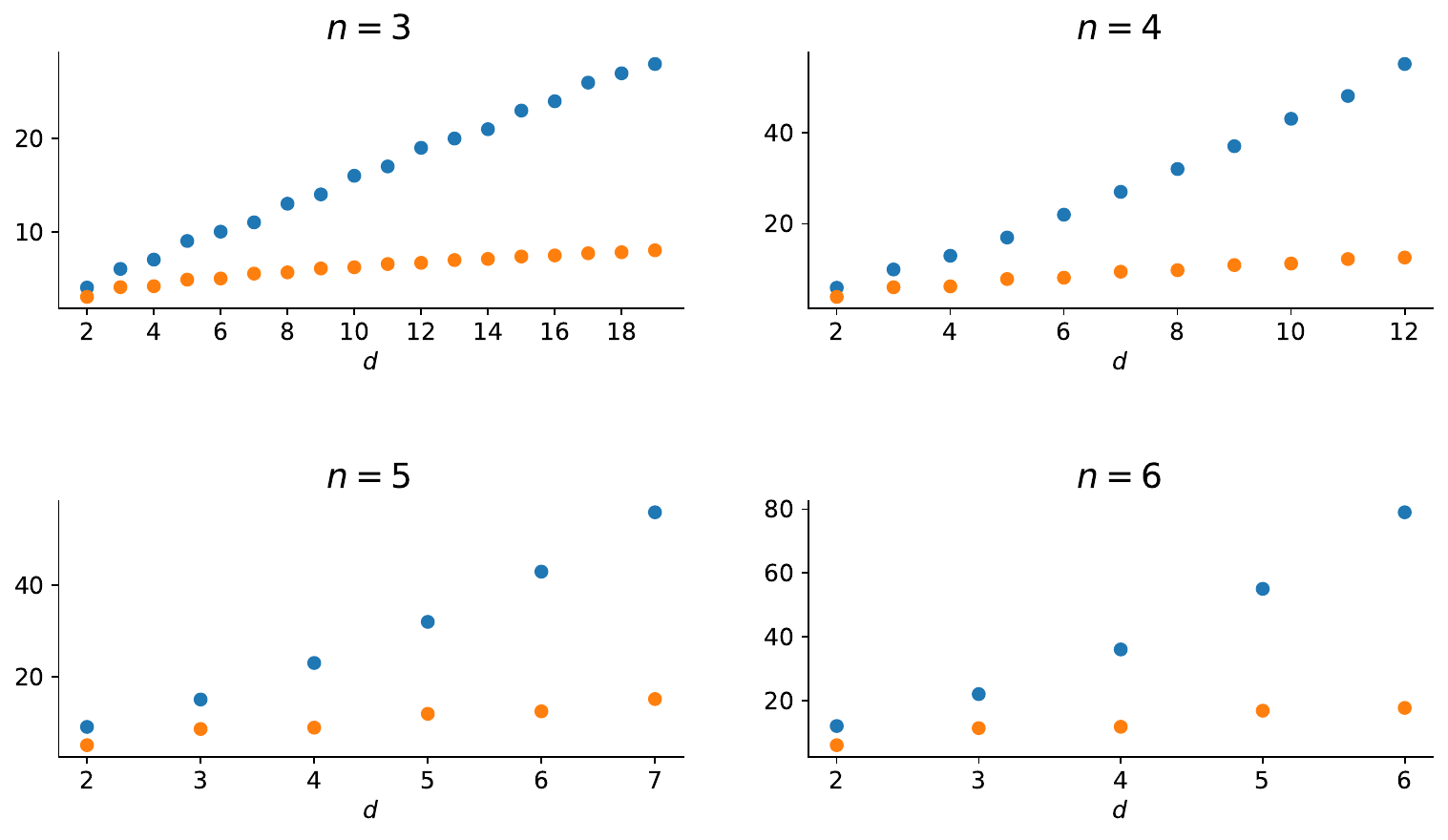}
\caption{Comparison of $\Vert p_{n,d}\Vert_{\mathfrak m_d,\rm sos}$ (orange) and the general upper bound $\sqrt{{{2d+n-1}\choose{n-1}}}$ (blue) for the Pythagoras number of $\mathbb C[\underline x]_{2d},$ for different $n$ and $d$.}
\label{fig:sosnorm}
\end{figure}

Beyond these numerical results we were not able to give asymptotic bounds of $\Vert p_{n,d}\Vert_{\mathfrak m_d,\rm sos}$. Such bounds would for example need an explicit sum of squares decomposition of $p_{n,d}$, beyond the obvious one.

\end{example}

\subsection{Some Remarks  on the SOS-Norm}\label{ss:norm} Finally, let us examine the sos-norm a little closer. 
We consider  the same general setup as in the proof of Corollary \ref{cor:impcar}, i.e.\ we fix bases $\underline v$ of $\mathcal V$, $\underline w$ of $\mathcal V^*\mathcal V$,  express  $v_i^*v_j=\sum_{l=1}^{k}\lambda_{l}^{i,j}\omega_l$, and set $$A_{l}\coloneqq \left(\overline\lambda_{l}^{i,j}\right)_{i,j}\in{\rm Mat}_d(\mathbb C)$$ for $l=1,\ldots, k$. Then for $a=\sum_{l=1}^{k}\lambda_{l}\omega_{l}\in\mathcal V^*\mathcal V$ we have $a\in\sum\mathcal V^2$ if and only if there exist some $M\in {\rm Psd}_d(\mathbb C)$ fulfilling $${\rm tr}(A_lM)=\lambda_{l} \quad \mbox{ for } l=1,\ldots, k,$$ and $\Vert a\Vert_{\underline v, \rm sos}$ is the minimal value of ${\rm tr}(M)$ among all such $M$.   Computing this is clearly a semidefinite program (in primal form). It is easy to check that the dual problem takes the following form:
 \begin{align*} (D) \quad \sup  \quad & \varphi(a)  \\
 {\rm s.t.}\quad &\varphi\colon \mathcal V^*\mathcal V\to \mathbb C \quad *\mbox{-linear}\\
 &\varphi(v^*v)\leqslant \Vert v\Vert^2 \mbox{ for all } v\in\mathcal V,
 \end{align*}
where $\Vert v\Vert$ denotes the $2$-norm of coefficients with respect to the basis $\underline v$. The duality theory of semidefinite programming immediately implies:

\begin{corollary} $\Vert a\Vert_{\underline v, \rm sos}$ is bounded from below by the optimal value of (D). 
If $a$ admits a positive definite Gram matrix, then $\Vert a\Vert_{\underline v, \rm sos}$ equals  the optimal value of (D). 
\end{corollary}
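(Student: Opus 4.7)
My plan is to recognize that $\Vert a\Vert_{\underline v,{\rm sos}}$ is the optimal value of the standard SDP
\[
(P)\quad \inf\;{\rm tr}(M)\quad\text{s.t.}\quad {\rm tr}(A_lM)=\lambda_l\ (l=1,\dots,k),\ M\in{\rm Psd}_d(\mathbb C),
\]
and then to identify (D) as its conic dual. Since $a$ is Hermitian and the $\omega_l$ are self-adjoint, the $\lambda_l$ are real. Forming the Lagrangian of $(P)$ with real multipliers $y_1,\dots,y_k$ and minimizing over $M\succeq 0$ yields the usual SDP dual: $\sup\sum_l y_l\lambda_l$ subject to $\sum_l y_lA_l\preceq I$.

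The next step is to match this with (D) via $\varphi(\omega_l):=y_l$, extended $\mathbb C$-linearly; this $\varphi$ is $*$-linear because the $\omega_l$ are self-adjoint and $y_l\in\mathbb R$. A short computation identifies the Hermitian matrix $B:=(\varphi(v_i^*v_j))_{i,j}$ with $\overline{\sum_l y_lA_l}$. For $v=\sum_ic_iv_i\in\mathcal V$ one has $\varphi(v^*v)=c^*Bc$ and $\Vert v\Vert^2=c^*c$, so the constraint $\varphi(v^*v)\leqslant\Vert v\Vert^2$ for all $v\in\mathcal V$ is equivalent to $B\preceq I$, and hence to $\sum_l y_lA_l\preceq I$ (conjugation preserves positivity). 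The objective $\varphi(a)=\sum_l y_l\lambda_l$ matches as well.

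With this identification, weak duality is the one-line calculation
\[
{\rm tr}(M)-\varphi(a)={\rm tr}\bigl(M(I-\textstyle\sum_l y_lA_l)\bigr)\geqslant 0
\]
for $M$ primal feasible and $\varphi$ dual feasible, which gives the first assertion. For the second, I would invoke Slater's condition for SDPs: the hypothesis that $a$ admits a positive definite Gram matrix is precisely strict primal feasibility in $(P)$, because the interior of ${\rm Psd}_d(\mathbb C)$ consists of the positive definite matrices. The standard strong duality theorem for conic programs then yields coincidence of the primal and dual optima (together with attainment of the dual supremum), which proves the second assertion.

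The only real obstacle is bookkeeping — tracking complex conjugations in the correspondences between the multipliers $y_l$ and the functional $\varphi$, and between the $A_l$ and the Gram matrix $B$ of $\varphi$ — rather than any conceptual difficulty, since the corollary is a direct instance of SDP duality.
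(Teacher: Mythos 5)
Your proposal is correct and follows exactly the route the paper takes: the paper identifies $\Vert a\Vert_{\underline v,\rm sos}$ as the optimal value of the primal trace-minimization SDP, states (D) as its dual, and invokes weak duality for the lower bound and strong duality under Slater's condition (strict primal feasibility being precisely the existence of a positive definite Gram matrix) for the equality. You simply carry out in detail the bookkeeping between the multipliers $y_l$ and the $*$-linear functional $\varphi$ that the paper leaves as "easy to check".
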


\begin{corollary}\label{cor:dual}
For  $p\in\sum\mathbb C[\underline x]_{d}^2$ we have $$\Vert p\Vert_{\infty}\leqslant\Vert p\Vert_{\mathfrak m_d, \rm sos}.$$ 
\end{corollary}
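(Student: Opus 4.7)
The plan is to exhibit, for each $s \in S^{n-1}$, an explicit feasible dual functional that evaluates $p$ to $p(s)$, and then take the supremum over $s$. Combined with the weak-duality half of the preceding corollary, this yields the bound.

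First I would define, for a fixed $s \in S^{n-1}$, the evaluation map
\[
\varphi_s \colon \mathcal{V}^*\mathcal{V} = \mathbb{C}[\underline{x}]_{2d} \to \mathbb{C}, \qquad q \mapsto q(s).
\]
Since the involution on $\mathbb{C}[\underline{x}]$ is complex conjugation of coefficients and the variables $x_i$ are Hermitian, one checks $q^*(s) = \overline{q(s)}$, so $\varphi_s$ is $*$-linear.

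Next I would verify that $\varphi_s$ is feasible for $(D)$. For any $v \in \mathcal{V} = \mathbb{C}[\underline{x}]_d$ with coefficient vector $\hat{v}$ in the monomial basis $\mathfrak{m}_d$, we have $v(s) = \langle \hat{v}, \mathfrak{m}_d(s)\rangle$, so by Cauchy--Schwarz
\[
\varphi_s(v^*v) = |v(s)|^2 \leqslant \|\hat{v}\|^2 \cdot \|\mathfrak{m}_d(s)\|^2.
\]
By Lemma \ref{lem:op}($ii$), $\|\mathfrak{m}_d(s)\| \leqslant 1$ for $s \in S^{n-1}$, so $\varphi_s(v^*v) \leqslant \|v\|^2$, where $\|v\|$ is the $2$-norm of coefficients with respect to $\underline v = \mathfrak{m}_d$. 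Hence $\varphi_s$ is feasible.

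Finally, since $p \in \sum \mathbb{C}[\underline{x}]_d^2$ evaluates nonnegatively on $\mathbb{R}^n$ (being a sum of moduli squared of complex polynomials), $|p(s)| = p(s)$ on $S^{n-1}$. Taking the supremum over $s$ of the feasible objective values $\varphi_s(p) = p(s)$ gives
\[
\|p\|_{\infty, S^{n-1}} = \sup_{s \in S^{n-1}} p(s) \leqslant \text{optimal value of }(D) \leqslant \|p\|_{\mathfrak{m}_d, \rm sos},
\]
where the last inequality is the first corollary of this subsection. The only subtlety is the observation that sum-of-Hermitian-squares polynomials are real and nonnegative on $\mathbb{R}^n$, which lets us drop the absolute value; everything else is a direct application of Cauchy--Schwarz, Lemma \ref{lem:op}($ii$), and weak duality, so I do not anticipate a real obstacle.
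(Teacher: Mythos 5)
Your proposal is correct and follows essentially the same route as the paper: the evaluation functional $\varphi_s$, Cauchy--Schwarz together with Lemma \ref{lem:op}($ii$) to verify dual feasibility, and weak duality to conclude. The paper's proof is nearly identical, including the observation that $|p(s)|=p(s)$ for a sum of Hermitian squares.
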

\begin{proof}
For $s\in S^{n-1}$ we consider the $*$-linear evaluation map $$\varphi_s\colon \mathbb C[\underline x]_{2d}\to\mathbb C; q\mapsto q(s).$$ We then have 
\begin{align*}
\varphi_s(q^*q)&= \vert q(s)\vert^2 \\ &\leqslant \Vert q\Vert^2 \cdot \Vert \mathfrak m_d(s)\Vert^2 \\&\leqslant \Vert q\Vert^2,
\end{align*} where the first inequality is Cauchy-Schwarz, and the second Lemma \ref{lem:op} ($ii$). Here, $\Vert q\Vert$ denotes the $2$-norm of coefficients of $q$, with respect to the basis $\mathfrak m_d$. So $\varphi_s$ is feasible for the dual problem ($D$) above, which implies $$\vert p(s)\vert= p(s)=\varphi_s(p)\leqslant \Vert p\Vert_{\mathfrak m_d,\rm sos}.$$ This proves the claim.
\end{proof}
Note that the inequality from \Cref{cor:dual}  can be strict. For the polynomials $p_{n,d}$ from Example \ref{ex:poly} we have $\Vert p_{n,d}\Vert_{\infty}=1,$ whereas the sos-norm is larger.

\bibliographystyle{abbrv}
\bibliography{references}

\newpage
\appendix
\section{Notations and norms}\label{app}

In this section, we summarize all norms and notations used throughout the paper. We have:
\begin{itemize}
	\item $\mathcal{A}$ is a $*$-algebra over $\mathbb{C}$.
	\item $\mathbb{C}[\underline{x}]_d$ is the space of homogeneous commutative polynomials of degree $d$ with complex coefficients. 
	\item $\mathbb{C}\langle \underline{z}\rangle_d$ is the space of homogeneous non-commutative polynomials of degree $d$ with complex coefficients. 
	\item ${\rm Mat}_d(\mathbb{C})$ is the space of $d \times d$ complex matrices, ${\rm Psd}_d(\mathbb C)$ the convex cone of positive semidefinite matrices.
	\item $\mathcal{L}({\rm Mat}_d(\mathbb{C}), \mathcal{A})$ is the space of linear maps from ${\rm Mat}_d(\mathbb{C})$ to $\mathcal{A}$
\end{itemize}

\vspace*{0.8cm}

\newcolumntype{C}[1]{%
 >{\vbox to 7ex\bgroup\vfill\centering}%
 p{#1}%
 <{\egroup}}
 
 \newcolumntype{R}[1]{%
 >{\vbox to 7ex\bgroup\vfill\raggedleft}%
 p{#1}%
 <{\egroup}}
 
 \newcolumntype{L}[1]{%
 >{\vbox to 7ex\bgroup\vfill\raggedright}%
 p{#1}%
 <{\egroup}}

\setlength{\tabcolsep}{2pt}

\scriptsize{
\renewcommand{\arraystretch}{0.8}
\begin{tabular}{R{1.45cm}| C{1.5cm}| C{2cm}| R{1.6cm} L{5cm}}
\multicolumn{1}{c|}{} & \multicolumn{1}{c|}{{\normalsize Symbol}} & \multicolumn{1}{c|}{{\normalsize Domain}} & \multicolumn{2}{c}{{\normalsize Definition}} \tabularnewline
\Xhline{1.5pt}
$\mathcal{A}$-norm \hspace{0.2cm} & $\Vert \cdot \Vert_\mathcal{A}$ & $\mathcal{A}$ & \multicolumn{2}{C{6.3cm}}{arbitrary vector space  norm}\tabularnewline
\hline
\makecell[r]{Euclidean \hspace{0.2cm}\\ norm \hspace{0.2cm}} & $\Vert \cdot \Vert$ & $\mathbb{C}^d$ & $\Vert v \Vert$ & $\displaystyle \coloneqq \left(\sum_{i=1}^{d} |v_i|^2\right)^{1/2}$\tabularnewline[-0.1cm]
\hline
$\infty$-norm \hspace{0.2cm} & $\Vert \cdot \Vert_{\infty}$ & $\mathbb{C}[\underline{x}]_d$ & $\displaystyle \Vert p \Vert_{\infty}$ & $\displaystyle \coloneqq \sup_{x \in S^{n-1}} |p(x_1, \ldots, x_n)|$\tabularnewline
\hline
\makecell[r]{coefficient \hspace{0.2cm}\\ $2$-norm \hspace{0.2cm}} & $\Vert \cdot \Vert_2$ & $\mathbb{C}\langle \underline{z}\rangle_d$ & $\displaystyle \left\Vert \sum_{\omega} p_{\omega} \omega \right\Vert_{2}$ & $\displaystyle \coloneqq \left(\sum_{\omega} |p_{\omega}|^2 \right)^{1/2}$\tabularnewline
\hline
\makecell[r]{Schatten \hspace{0.2cm}\\ $p$-norm \hspace{0.2cm}} & $\Vert \cdot \Vert_p$ & ${\rm Mat}_d(\mathbb{C})$ & $\displaystyle \Vert M \Vert_{p}$ & $\coloneqq \Big({{\rm tr}\big(\sqrt{M^*M}^p\big)}\Big)^{1/p}$\tabularnewline
\hline
\makecell[r]{Schatten \hspace{0.2cm}\\ $\infty$-norm \hspace{0.2cm}} & $\Vert \cdot \Vert_\infty$ & ${\rm Mat}_d(\mathbb{C})$ & $\displaystyle \Vert M \Vert_{\infty}$ & $\coloneqq \sigma_{\max}(M), \mbox{ largest singular value}$\tabularnewline
\hline

\makecell[r]{$p \to \mathcal{A}$ \hspace{0.2cm}\\ norm \hspace{0.2cm}} & $\Vert \cdot \Vert_{p \to \mathcal{A}}$ & $\mathcal{L}({\rm Mat}_d(\mathbb{C}), \mathcal{A})$ & $\Vert G \Vert_{p \to \mathcal{A}}$ & $\coloneqq \displaystyle \max_{M \in {\rm Mat}_d} \frac{\Vert G(M) \Vert_{\mathcal{A}}}{\Vert M \Vert_{p}}$\tabularnewline
\hline
sos-norm \hspace{0.2cm} & $\Vert \cdot \Vert_{\underline{v}, \rm sos}$ & $\sum \mathcal{V}^2\subseteq \mathcal A$ & $\displaystyle \Vert a \Vert_{\underline{v}, \rm sos}$ & $\displaystyle \coloneqq \min \Big\{\textrm{tr}(M): M \textrm{ psd, }  G_{\underline{v}}(M) = a \Big\}$\tabularnewline
\end{tabular}}
\normalsize

\end{document}